\newcommand{\mb}{\mathbb}
\newcommand{\ol}{\overline}
\newcommand{\C}{\mb{C}}
\newcommand{\bs}{\backslash}
\begin{document}
\newtheorem{Theo}{Theorem}
\newtheorem{Ques}{Question}
\newtheorem{Prop}[Theo]{Proposition}
\newtheorem{Exam}[Theo]{Example}
\newtheorem{Lemma}[Theo]{Lemma}

\newtheorem{Claim}{Claim}
\newtheorem{Cor}[Theo]{Corollary}
\newtheorem{Conj}[Theo]{Conjecture}

\theoremstyle{definition}
\newtheorem{Defn}[Theo]{Definition}
\newtheorem{Remark}[Theo]{Remark}
\title{Spectra of Coronae}

\let\thefootnote\relax\footnotetext{\noindent The published version of this
  paper appears in \emph{Linear Algebra and its Applications}, Volume
  435, no.~5, (2011), and contains occasional simplifications and additions
  beyond the current version.}

\author{Cam McLeman and Erin McNicholas}

\begin{abstract}
We introduce a new invariant, the \emph{coronal} of a graph, and use
it to compute the spectrum of the corona $G\circ H$ of two graphs $G$
and $H$.  In particular, we show that this spectrum is completely
determined by the spectra of $G$ and $H$ and the coronal of $H$.
Previous work has computed the spectrum of a corona only in the case
that $H$ is regular.  We then explicitly compute the coronals for
several families of graphs, including regular graphs, complete
$n$-partite graphs, and paths.  Finally, we use the corona
construction to generate many infinite families of pairs of cospectral
graphs.
\end{abstract}

\maketitle
\section{Introduction}
Let $G$ and $H$ be (finite, simple, non-empty) graphs.  The
\emph{corona} $G\circ H$ of $G$ and $H$ is constructed as follows:
Choose a labeling of the vertices of $G$ with labels $1,2,\ldots,m$.
Take one copy of $G$ and $m$ disjoint copies of $H$, labeled
$H_1,\ldots,H_m$, and connect each vertex of $H_i$ to vertex $i$ of
$G$.  This construction was introduced by Frucht and Harary
\cite{FH70} with the (achieved) goal of constructing a graph whose
automorphism group is the wreath product of the two component
automorphism groups.  Since then, a variety of papers have appeared
investigating a wide range of graph-theoretic properties of coronas,
such as the bandwidth \cite{CLY92}, the minimum sum \cite{Wi93},
applications to Ramsey theory \cite{Ne85}, etc.  Further, the spectral
properties of coronas are significant in the study of invertible
graphs.  Briefly, a graph $G$ is invertible if the inverse of the
graph's adjacency matrix is diagonally similar to the adjacency matrix
of another graph $G^+$.  Motivated by applications to quantum
chemistry, Godsil \cite{Go85} studies invertible bipartite graphs with
a unique perfect matching.  In response to his question asking for a
characterization of such graphs with the additional property that
$G\cong G^+$, Simian and Cao \cite{SC89} determine the answer to be
exactly the coronas of bipartite graphs with the single-vertex graph
$K_1$.

\vspace*{.1in}

The study of spectral properties of coronas was continued by Barik,
et. al. in \cite{BPS07}, who found the spectrum of the corona $G\circ
H$ in the special case that $H$ is regular.  In Section 2, we drop the
regularity condition on $H$ and compute the spectrum of the corona of
any pair of graphs using a new graph invariant called the coronal. In
Section 3, we compute the coronal for several families of graphs,
including regular graphs examined in \cite{BPS07}, complete
$n$-partite graphs, and path graphs.  Finally, in Section 4, we see
that properties of the spectrum of coronas lends itself to
finding many large families of cospectral graph pairs.

\subsection*{Notation}
The symbols ${\bf 0}_n$ and ${\bf 1}_n$ (resp., ${\bf 0}_{mn}$ and
${\bf 1}_{mn}$) will stand for length-$n$ column vectors (resp.
$m\times n$ matrices) consisting entirely of 0's and 1's.  For two
matrices $A$ and $B$, the matrix $A\otimes B$ is the Kronecker (or
tensor) product of $A$ and $B$.  For a graph $G$ with adjacency matrix
$A$, the characteristic polynomial of $G$ is
$f_G(\lambda):=\det(\lambda I-A)$.  We use the standard notations
$P_n$, $C_n$, $S_n$, and $K_n$ respectively for the path, cycle, star,
and complete graphs on $n$ vertices.

\section{The Main Theorem}
Let $G$ and $H$ be finite simple graphs on $m$ and $n$ vertices,
respectively, and let $A$ and $B$ denote their respective adjacency
matrices.  We begin by choosing a convenient labeling of the vertices
of $G\circ H$.  Recall that $G\circ H$ is comprised of the $m$
vertices of $G$, which we label arbitrarily using the symbols
$\{1,2,\ldots,m\}$, and $m$ copies $H_1,H_2,\ldots,H_m$ of $H$.
Choose an arbitrary ordering $h_1,h_2,\ldots,h_n$ of the vertices of
$H$, and label the vertex in $H_i$ corresponding to $h_k$ by the label
$i+mk$.  Below is a sample corona with the above labeling procedure:
\begin{center}
\begin{tabular}{ccc}
\quad\quad\begin{tikzpicture}[scale=0.8,transform shape]
  \Vertex[x=0,y=1]{1}
  \Vertex[x=1,y=1]{2}
  \Vertex[x=1,y=0]{3}
  \Vertex[x=0,y=0]{4}
  \Edge(1)(2)
  \Edge(2)(3)
  \Edge(3)(4)
  \Edge(4)(1)
  \draw(.5,-2) node[scale=1.5625] {$G$};
\end{tikzpicture}\quad\quad
&
\quad\quad\begin{tikzpicture}[scale=0.8,transform shape]
  \Vertex[x=0,y=0]{1}
  \Vertex[x=1,y=0]{2}
  \Vertex[x=2,y=0]{3}
  \Edge(1)(2)
  \Edge(2)(3)
  \draw(1,-2) node[scale=1.5625] {$H$};
\end{tikzpicture}\quad\quad
&
\quad\quad\begin{tikzpicture}[scale=0.5,transform shape]
  \Vertex[x=0,y=3]{1}
  \Vertex[x=-2,y=3]{5}
  \Vertex[x=-1,y=4]{9}
  \Vertex[x=0,y=5]{13}

  \Vertex[x=3,y=3]{2}
  \Vertex[x=3,y=5]{6}
  \Vertex[x=4,y=4]{10}
  \Vertex[x=5,y=3]{14}

  \Vertex[x=3,y=0]{3}
  \Vertex[x=5,y=0]{7}
  \Vertex[x=4,y=-1]{11}
  \Vertex[x=3,y=-2]{15}

  \Vertex[x=0,y=0]{4}
  \Vertex[x=0,y=-2]{8}
  \Vertex[x=-1,y=-1]{12}
  \Vertex[x=-2,y=0]{16}

  \tikzstyle{LabelStyle}=[fill=white,sloped]
  \foreach \x/\y in {1/2,2/3,3/4,4/1}{\Edge(\x)(\y)}
  \foreach \x/\y in {1/5,1/9,1/13,5/9,9/13}{\Edge(\x)(\y)}
  \foreach \x/\y in {2/6,2/10,2/14,6/10,10/14}{\Edge(\x)(\y)}
  \foreach \x/\y in {3/7,3/11,3/15,7/11,11/15}{\Edge(\x)(\y)}
  \foreach \x/\y in {4/8,4/12,4/16,8/12,12/16}{\Edge(\x)(\y)}
  \draw(1.5,-4) node [scale=2.5]{$G\circ H$};
\end{tikzpicture}
\end{tabular}
\end{center}
\noindent Under this labelling the
adjacency matrix of $G\circ H$ is given by
$$
A\circ B:=\begin{bmatrix}
A&{\bf 1}_n^T\otimes I_m\\
{\bf 1}_n\otimes I_m&B\otimes I_m
\end{bmatrix}.
$$ The goal now is to compute the eigenvalues of this corona matrix in
terms of the spectra of $A$ and $B$.  We introduce one new invariant
for this purpose.

\begin{Defn}
Let $H$ be a graph on $n$ vertices, with adjacency matrix $B$. Note
that, viewed as a matrix over the field of rational functions
$\C(\lambda)$, the characteristic matrix $\lambda I-B$ has determinant
$\det(\lambda I-B)=f_H(\lambda)\neq 0$, so is invertible.  The
\emph{coronal} $\chi_H(\lambda)\in \C(\lambda)$ of $H$ is defined to
be the sum of the entries of the matrix $(\lambda I-B)^{-1}$.  Note
this can be calculated as
$$
\chi_H(\lambda)={\bf 1}_n^T(\lambda I_n-B)^{-1}{\bf 1}_n.
$$
\end{Defn}

Our main theorem is that, beyond the spectra of $G$ and $H$, only the
coronal of $H$ is needed to compute the spectrum of $G\circ H$.

\begin{Theo}\label{BigThm}
Let $G$ and $H$ be graphs with $m$ and $n$ vertices.  Let
$\chi_H(\lambda)$ be the coronal of $H$.  Then the characteristic
polynomial of $G\circ H$ is
$$f_{G\circ H}(\lambda)=f_H(\lambda)^mf_G(\lambda-\chi_H(\lambda)).$$
In particular, the spectrum of $G\circ H$ is completely determined by
the characteristic polynomials $f_G$ and $f_H$, and the coronal
$\chi_H$ of $H$.
\end{Theo}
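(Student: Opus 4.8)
The plan is to compute $f_{G\circ H}(\lambda)=\det\!\big(\lambda I_{m(n+1)}-A\circ B\big)$ directly from the block form of the corona matrix, using the Schur complement formula for the determinant of a block matrix. Writing
$$
\lambda I-A\circ B=\begin{bmatrix}\lambda I_m-A & -{\bf 1}_n^T\otimes I_m\\[2pt] -{\bf 1}_n\otimes I_m & (\lambda I_n-B)\otimes I_m\end{bmatrix},
$$
the key observation is that the lower-right block is $(\lambda I_n-B)\otimes I_m$, which, regarded as a matrix over the field of rational functions $\C(\lambda)$, is invertible with inverse $(\lambda I_n-B)^{-1}\otimes I_m$ and determinant $\det(\lambda I_n-B)^m=f_H(\lambda)^m$. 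This is precisely why the computation must be carried out over $\C(\lambda)$ rather than over $\C$: when $\lambda$ is an eigenvalue of $B$ the block is singular, but as an identity of rational functions everything is legitimate, and since both sides of the claimed formula lie in $\C[\lambda]\subset\C(\lambda)$, the rational-function identity immediately yields the polynomial identity.

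Applying the Schur complement formula $\det\left[\begin{smallmatrix}P&Q\\R&S\end{smallmatrix}\right]=\det(S)\det(P-QS^{-1}R)$ with respect to this lower-right block $S$, I get
$$
f_{G\circ H}(\lambda)=\det\!\big((\lambda I_n-B)\otimes I_m\big)\cdot\det\!\Big((\lambda I_m-A)-({\bf 1}_n^T\otimes I_m)\big((\lambda I_n-B)^{-1}\otimes I_m\big)({\bf 1}_n\otimes I_m)\Big),
$$
where the two sign changes from $-{\bf 1}_n^T\otimes I_m$ and $-{\bf 1}_n\otimes I_m$ cancel. The first factor is $f_H(\lambda)^m$. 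For the second factor I would repeatedly invoke the mixed-product property $(X\otimes Y)(Z\otimes W)=(XZ)\otimes(YW)$ to collapse the triple product:
$$
({\bf 1}_n^T\otimes I_m)\big((\lambda I_n-B)^{-1}\otimes I_m\big)({\bf 1}_n\otimes I_m)=\big({\bf 1}_n^T(\lambda I_n-B)^{-1}{\bf 1}_n\big)\otimes I_m=\chi_H(\lambda)\,I_m,
$$
the last equality being the very definition of the coronal (the middle factor is a $1\times 1$ matrix, i.e.\ a scalar in $\C(\lambda)$).

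Substituting back, the Schur complement becomes $(\lambda-\chi_H(\lambda))I_m-A$, whose determinant is exactly $f_G(\lambda-\chi_H(\lambda))$ by the definition of the characteristic polynomial (evaluating $f_G\in\C[t]$ at the element $\lambda-\chi_H(\lambda)\in\C(\lambda)$). Hence $f_{G\circ H}(\lambda)=f_H(\lambda)^m\,f_G(\lambda-\chi_H(\lambda))$, and the final assertion of the theorem is then immediate, since $f_G$, $f_H$, and $\chi_H$ are the only data entering the right-hand side. I do not expect a genuine obstacle here beyond careful bookkeeping with Kronecker products; the one point that truly requires care is the one flagged above — that $\lambda I_n-B$ is invertible only over $\C(\lambda)$ — so the Schur manipulation should be performed in that ring, with the polynomial identity recovered at the end by comparing both sides as elements of $\C[\lambda]$.
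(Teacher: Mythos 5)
Your proposal is correct and follows essentially the same route as the paper's proof: block-decompose $\lambda I-A\circ B$, apply the Schur complement determinant formula to the invertible (over $\C(\lambda)$) block $(\lambda I_n-B)\otimes I_m$, and collapse the triple product via the mixed-product property of Kronecker products to recognize $\chi_H(\lambda)I_m$. Your explicit remark that the manipulation takes place over $\C(\lambda)$ and that the polynomial identity is recovered at the end is a welcome clarification of what the paper phrases only as ``an equality of rational functions.''
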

\begin{proof}
\noindent Let $A$ and $B$ denote the respective adjacency matrices
of $G$ and $H$.  We compute the characteristic polynomial of the
matrix $A\circ B$.  For this, we recall two elementary results from
linear algebra on the multiplication of Kronecker products and
determinants of block matrices:
\begin{itemize}
\item In cases where each multiplication makes sense, we have
$$M_1M_2\otimes M_3M_4=(M_1\otimes M_3)(M_2\otimes M_4).$$
\item If $M_4$ is invertible, then
$$
\det\begin{pmatrix}M_1&M_2\\M_3&M_4\end{pmatrix}=\det(M_4)\det(M_1-M_2M_4^{-1}M_3).
$$
\end{itemize}
Combining these two facts, we have (as an equality of rational functions)
\begin{align*}
f_{G\circ H}(\lambda)&=\det(\lambda I_{m(n+1)}-A\circ B)\\
&=\det\begin{pmatrix}\lambda I_m-A&-{\bf 1}_n^T\otimes I_m\\-{\bf 1}_n\otimes I_m&\lambda I_{mn}-B\otimes I_m\end{pmatrix}\\
&=\det\begin{pmatrix}\lambda I_m-A&-{\bf 1}_n^T\otimes I_m\\-{\bf 1}_n\otimes I_m&(\lambda I_n-B)\otimes I_m\end{pmatrix}\\
&=\det((\lambda I_n-B)\otimes I_m)\det\bigg[(\lambda I_m-A)-({\bf 1}_n^T\otimes I_m)((\lambda I_n-B)\otimes I_m)^{-1}({\bf 1}_n\otimes I_m)\bigg]\\
&=\det(\lambda I_n-B)^m\det(\lambda I_m-A-({\bf 1}_n^T(\lambda I_n-B)^{-1}{\bf 1}_n)\otimes I_m)\\
&=\det(\lambda I_n-B)^m\det((\lambda-\chi_H(\lambda))I_m-A)\\
&=f_H(\lambda)^mf_G(\lambda-\chi_H(\lambda)).
\end{align*}
\end{proof}

\begin{Remark}\label{cospec}
A natural question is whether or not the spectrum of $G\circ H$ is
determined by the spectra of $G$ and $H$, i.e., whether knowledge of
the coronal is strictly necessary.  Indeed it is: Computing the
coronals of the cospectral graphs $S_5$ and $C_4\cup K_1$, we have
\[
\chi_{S_5}(\lambda)=\frac{5\lambda+8}{\lambda^2-4}\quad\quad\text{ and }\quad\quad \chi_{C_4\cup K_1}(\lambda)=\frac{5\lambda-2}{\lambda^2-2\lambda}.
\] Thus cospectral graphs need not have the same coronal, and hence
for a given graph $G$, the spectra of $G\circ S_5$ and $G\circ(C_4\cup
K_1)$ are almost always distinct.  Note that this stands in
stark contrast to the situation for the Cartesian and tensor products
of graphs.  In both of these cases, the spectrum of the product is
determined by the spectra of the components.
\end{Remark}

The unexpected simplicity of the examples in the above remark are
representative of a fairly common phenomena: since the coronal
$\chi_H(\lambda)=\frac{\widetilde{\chi}_H(\lambda)}{f_H(\lambda)}$ can
be computed as the quotient of the sum $\widetilde{\chi}_H(\lambda)$
of the cofactors of $\lambda I-B$ by the characteristic polynomial
$f_H(\lambda)$, it is \emph{a priori} the quotient of a degree $n-1$
polynomial by a degree $n$ polynomial.  In practice, however, as in
the examples in the remark, these two polynomials typically have roots
in common, providing for a reduced expression for the coronal.  Let us
suppose that
$g(\lambda):=\gcd(\widetilde{\chi}_H(\lambda),f_H(\lambda))$ has
degree $n-d$ (the $\gcd$ being taken in $\C[\lambda]$), so that
$\chi_H(\lambda)$ in its reduced form is a quotient of a degree $d-1$
polynomial by a degree $d$ polynomial.  Since the denominator of this
reduced fraction is a factor of $f_H(\lambda)$, and since $f_G$ is of
degree $m$, each pole of $\chi_H(\lambda)$ is simultaneously a
multiplicity-$m$ pole of $f_G(\lambda-\chi_H(\lambda))$ and a
multiplicity-$m$ root of $f_H(\lambda)^m$.  Since these contributions
cancel in the overall determination of the roots of $f_{G\circ
  H}(\lambda)$ in the expression

$$f_{G\circ H}(\lambda)=f_H(\lambda)^mf_G(\lambda-\chi_H(\lambda))$$

\noindent from Theorem \ref{BigThm}, we can now more explicitly
describe the spectrum of the corona.  Namely, let $d$ be the degree of
the denominator of $\chi_H(\lambda)$ as a reduced fraction.  Then the
spectrum of $G\circ H$ consists of:
\begin{itemize}
\item Some ``old'' eigenvalues, i.e., the roots of $f_H(\lambda)$
  which are not poles of $\chi_H(\lambda)$ (or equivalently, the roots
  of $g(\lambda)$), each with multiplicity $|G|$; and
\item Some ``new'' eigenvalues, i.e., the values $\lambda$ such that
  $\lambda-\chi_H(\lambda)$ is an eigenvalue $\mu$ of $G$ (with the
  multiplicity of $\lambda$ equal to the multiplicity of $\mu$ as an
  eigenvalue of $G$.)
\end{itemize}
Since for a given $\mu$, solving $\lambda-\chi_H(\lambda)=\mu$ by
clearing denominators amounts to finding the roots of a degree $d+1$
polynomial in $\lambda$, the above two bullets combine to respectively
provide all $(n-d)m+m(d+1)=m(n+1)$ eigenvalues of $G\circ H$.

\bigskip

The following table, computed using SAGE (\cite{sage}), gives the
number of graphs on $n$ vertices whose coronal has a denominator of
degree $d$ (as a reduced fraction), as well as the average degree of
this denominator, for $1\leq n\leq 7$.

\begin{table}[!ht]
\centering
\parbox{195pt}{Table 1:  Number of graphs on $n$ vertices whose coronal has denominator of degree $d$}
\begin{tabular}{|c|c|c|c|c|c|c|c|}\hline
$d\bs n$&1&2&3&4&5&6&7\\\hline
1&1&2&2&4&3&8&6\\
2&&0&2&5&12&28&44\\
3&&&0&2&13&50&138\\
4&&&&0&6&40&304\\
5&&&&&0&22&246\\
6&&&&&&8&214\\
7&&&&&&&92\\\hline
Total&1&2&4&11&34&156&1044\\\hline
Average $d$&1&1&1.5&1.82&2.65&3.41&4.68\\\hline
(Average $d$)/$n$&1&0.5&0.5&0.45&0.53&0.57&0.66\\\hline
\end{tabular}
\end{table}

\vspace*{.1in}

Since determining the characteristic polynomial of $G\circ H$ from the
spectra of $G$ and $H$ requires only the extra knowledge of the
coronal of $H$, it remains to develop techniques for computing these
coronals.  In Section 3, we will develop shortcuts for these
computations, but we briefly conclude this section with some more
computationally-oriented approaches.  A first such option is to have a
software package with linear algebra capabilities directly compute the
inverse of $\lambda I-B$ and sum its entries, as done in the
computations for Table 1.  This seems to be computationally feasible
only for rather small graphs (e.g., $n\leq 12$).  A second, more
graph-theoretic, option relies on a combinatorial result of Schwenk
\cite{Sc91} to compute each cofactor of $\lambda I-B$ individually,
before summing them to compute the coronal:

\begin{Theo}[Schwenk, \cite{Sc91}]\label{Schwenk}
For vertices $i$ and $j$ of a graph $H$ with adjacency matrix $B$, let
$\mathcal{P}_{i,j}$ denote the set of paths from $i$ to $j$.  Then
$$
\operatorname{adj}(\lambda I-B)_{i,j}=\sum_{P\in \mathcal{P}_{i,j}}f_{H-P}(\lambda).
$$
\end{Theo}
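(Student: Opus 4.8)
The plan is to work directly from the Leibniz permutation expansion of the determinant. The key preliminary fact is that, for any $n\times n$ matrix $M$ and any indices $i,j$,
\[
\operatorname{adj}(M)_{i,j}=\frac{\partial}{\partial M_{j,i}}\det M=\sum_{\substack{\sigma\in S_n\\ \sigma(j)=i}}\operatorname{sgn}(\sigma)\prod_{v\neq j}M_{v,\sigma(v)},
\]
which follows from $\det M=\sum_{\sigma}\operatorname{sgn}(\sigma)\prod_v M_{v,\sigma(v)}$ together with the Laplace expansion along row $j$ (the cofactor $C_{j,i}$ does not involve any entry of row $j$, so $\partial\det M/\partial M_{j,i}=C_{j,i}=\operatorname{adj}(M)_{i,j}$). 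I would then specialize to $M=\lambda I-B$, where $M_{v,v}=\lambda$ and, for $v\neq w$, $M_{v,w}$ equals $-1$ if $vw$ is an edge of $H$ and $0$ otherwise.

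The heart of the argument is combinatorial: identify which permutations $\sigma$ with $\sigma(j)=i$ contribute a nonzero term. Writing $\sigma$ in cycle notation, $\sigma(j)=i$ forces $i$ and $j$ onto a common cycle $(v_0\,v_1\cdots v_\ell)$ with $v_0=i$ and $v_\ell=j$ (in the degenerate case $i=j$ this is the fixed point $(i)$, i.e.\ $\ell=0$). For the monomial to be nonzero, each factor $M_{v,\sigma(v)}$ with $v\neq j$ must be nonzero; since the index $v=j$ is \emph{omitted} from the product, this forces $v_0v_1,\,v_1v_2,\dots,v_{\ell-1}v_\ell$ to be edges of $H$ but imposes \emph{no} condition on the ``return pair'' $v_\ell v_0=ji$. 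Because a cycle visits distinct vertices, $i=v_0,v_1,\dots,v_\ell=j$ is therefore a self-avoiding path $P\in\mathcal{P}_{i,j}$ in $H$ — this omission of the index $j$ is precisely what produces paths rather than closed walks. The remaining vertices form $V(H)\setminus\{v_0,\dots,v_\ell\}=V(H-P)$ and are permuted among themselves by $\sigma$ via an arbitrary $\tau\in S_{V(H-P)}$; conversely, each pair $(P,\tau)$ reassembles to a unique such $\sigma$, so the sum reorganizes as a double sum over $P$ and $\tau$.

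Finally I would reconcile the two sources of signs. The $\ell$ off-diagonal factors $M_{v_0,v_1},\dots,M_{v_{\ell-1},v_\ell}$ each equal $-1$, contributing $(-1)^\ell$, while $\operatorname{sgn}(\sigma)=\operatorname{sgn}\big((v_0\cdots v_\ell)\big)\cdot\operatorname{sgn}(\tau)=(-1)^\ell\operatorname{sgn}(\tau)$ since an $(\ell+1)$-cycle has sign $(-1)^\ell$. These two factors of $(-1)^\ell$ cancel, so the $\sigma$-term collapses to $\operatorname{sgn}(\tau)\prod_{v\in V(H-P)}M_{v,\tau(v)}$; summing over $\tau\in S_{V(H-P)}$ gives $\det\big((\lambda I-B)|_{V(H-P)}\big)=f_{H-P}(\lambda)$ (using that $H-P$ is an induced subgraph), and summing over $P\in\mathcal{P}_{i,j}$ yields the claim. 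I expect the only delicate point to be the bookkeeping in the middle step — pinning down exactly which adjacencies are forced, and in particular that the return pair $ji$ is not — together with the degenerate case $i=j$, where $\mathcal{P}_{i,i}$ must be read as containing the single one-vertex path and the formula reduces to the classical identity $\operatorname{adj}(\lambda I-B)_{i,i}=f_{H-i}(\lambda)$. One could instead start from $\operatorname{adj}(M)_{i,j}=(-1)^{i+j}\det(M\text{ with row }j\text{ and column }i\text{ removed})$, but then one must track the sign picked up when a bijection $V\setminus\{j\}\to V\setminus\{i\}$ is reindexed as a genuine permutation, which seems messier than the derivative-of-determinant route.
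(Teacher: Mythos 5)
Your argument is correct. Note, however, that the paper offers no proof of this statement at all: it is quoted as a known result of Schwenk with a citation to \cite{Sc91}, so there is no in-paper argument to compare against. Your Leibniz-expansion proof is a valid, self-contained derivation: the identity $\operatorname{adj}(M)_{i,j}=\sum_{\sigma(j)=i}\operatorname{sgn}(\sigma)\prod_{v\neq j}M_{v,\sigma(v)}$ is right, the observation that omitting the index $v=j$ frees the pair $ji$ from any adjacency requirement (so the $j$-cycle traces out a self-avoiding path rather than a closed walk) is exactly the crux, and the two factors of $(-1)^\ell$ from the off-diagonal entries and from the sign of an $(\ell+1)$-cycle do cancel, leaving $\sum_{\tau}\operatorname{sgn}(\tau)\prod_{v\in V(H-P)}M_{v,\tau(v)}=f_{H-P}(\lambda)$. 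The only points worth making explicit in a final write-up are the two conventions you already flag: $\mathcal{P}_{i,i}$ contains the one-vertex path (giving the classical $\operatorname{adj}(\lambda I-B)_{i,i}=f_{H-i}(\lambda)$), and $f_{H-P}=1$ when $P$ is a Hamiltonian path, corresponding to the empty determinant.
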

\noindent Again, this approach becomes computationally infeasible
fairly quickly without a method for pruning the number of cofactors to
calculate.  We explore this idea in the next section.  Regardless,
from Theorem \ref{Schwenk}, we obtain:
\begin{Cor}
The spectrum of the corona $G\circ H$ is determined by the spectrum of
$G$ and the spectra of the proper subgraphs of $H$ (or more
economically, only from the spectra of those subgraphs obtained by
deleting paths from $H$).
\end{Cor}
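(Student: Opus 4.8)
The plan is to use Theorem \ref{BigThm} to reduce the claim to a statement purely about the coronal $\chi_H$, and then read that statement off from Schwenk's formula (Theorem \ref{Schwenk}); since a graph's spectrum and its characteristic polynomial determine each other, it suffices to show that $f_{G\circ H}$ is computable from $f_G$ together with the characteristic polynomials of (path-deleted, hence proper) subgraphs of $H$. First I would clear denominators in Theorem \ref{BigThm}: writing $f_G(x)=\prod_{j=1}^{m}(x-\mu_j)$ for the eigenvalues $\mu_1,\dots,\mu_m$ of $G$, and $\chi_H(\lambda)=\widetilde{\chi}_H(\lambda)/f_H(\lambda)$ with $\widetilde{\chi}_H$ the sum of the cofactors of $\lambda I-B$, the formula of Theorem \ref{BigThm} becomes the polynomial identity
\[
f_{G\circ H}(\lambda)=\prod_{j=1}^{m}\Big((\lambda-\mu_j)\,f_H(\lambda)-\widetilde{\chi}_H(\lambda)\Big),
\]
the $f_H(\lambda)^m$ cancelling against the denominators. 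Thus the spectrum of $G\circ H$ is determined once we know the multiset $\{\mu_1,\dots,\mu_m\}$ together with the two polynomials $f_H$ and $\widetilde{\chi}_H$, so it remains to express these two in terms of the allowed data.

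The polynomial $\widetilde{\chi}_H$ is handled directly by Theorem \ref{Schwenk}: summing the identity $\operatorname{adj}(\lambda I-B)_{i,j}=\sum_{P\in\mathcal{P}_{i,j}}f_{H-P}(\lambda)$ over all ordered pairs $(i,j)$ of vertices of $H$ gives $\widetilde{\chi}_H(\lambda)=\sum_{i,j}\sum_{P\in\mathcal{P}_{i,j}}f_{H-P}(\lambda)$, a sum of characteristic polynomials of graphs each obtained by deleting the vertex set of a path of $H$, in particular a proper subgraph. The polynomial $f_H$ is the one genuinely delicate point: it is not the characteristic polynomial of any proper subgraph (two cospectral graphs have equally many edges), so Theorem \ref{Schwenk} does not directly produce it and an auxiliary identity is needed. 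Taking $i=j$ in Theorem \ref{Schwenk}, where $\mathcal{P}_{i,i}$ is the single trivial path, already recovers the classical identity $f_H'(\lambda)=\sum_{v}f_{H-v}(\lambda)$, which pins down every coefficient of $f_H$ except its constant term $f_H(0)=(-1)^{n}\det B$; I would supply the missing constant via the vertex-deletion reduction formula for the characteristic polynomial, which writes $f_H$ itself as an explicit integer combination of $\lambda f_{H-v}$, the $f_{H-v-w}$ over neighbours $w$ of a fixed vertex $v$, and the $f_{H-V(C)}$ over cycles $C$ through $v$ — all proper subgraphs — and then evaluate at $\lambda=0$ (alternatively, appeal to the reconstructibility of the characteristic polynomial from the vertex-deleted deck).

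Assembling these proves the first assertion: with $f_G$, $\widetilde{\chi}_H$, and $f_H$ all determined by the spectrum of $G$ and the spectra of the proper subgraphs of $H$, the displayed product formula determines $f_{G\circ H}$ and hence the spectrum of $G\circ H$. For the more economical statement, the subgraphs feeding $\widetilde{\chi}_H$ are by construction path-deletions $H-V(P)$, and the vertex-deletions $H-v$ and two-vertex deletions $H-v-w$ (a single edge being a path) appearing in the reduction for $f_H$ are also path-deletions; only the cycle-deletions $H-V(C)$ are not visibly of this shape, and I would eliminate them either by basing the recovery of $f_H$ on the vertex-deleted subgraphs $H-v$ alone (which are path-deletions) via polynomial reconstructibility, or by a further routine reduction of each $f_{H-V(C)}$. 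The step I expect to be the main obstacle is exactly this last one — certifying that path-deletion data alone recovers $f_H$, constant term included — since the algebraic reduction via Theorem \ref{BigThm} and the evaluation of $\widetilde{\chi}_H$ via Theorem \ref{Schwenk} are both routine.
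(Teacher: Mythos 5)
Your core reduction is exactly the paper's (unstated) argument: the paper derives this corollary directly from Theorem \ref{Schwenk} together with Theorem \ref{BigThm}, the point being that $\widetilde{\chi}_H=\sum_{i,j}\operatorname{adj}(\lambda I-B)_{i,j}$ is a sum of characteristic polynomials of path-deleted, hence proper, subgraphs of $H$; your cleared-denominator identity $f_{G\circ H}=\prod_j\bigl((\lambda-\mu_j)f_H-\widetilde{\chi}_H\bigr)$ is a clean way to package this. Where you go beyond the paper is in noticing that $f_H$ itself still appears in that product and is not the characteristic polynomial of any proper subgraph. That is a genuine imprecision in the corollary as literally stated, and you are right to flag it; the paper simply does not address it.

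However, neither of your proposed patches closes the gap you identified. The identity $f_H'=\sum_v f_{H-v}$ (the $i=j$ terms of Schwenk's formula) determines $f_H$ only up to its constant term $(-1)^n\det B$, and the assertion that this constant is recoverable from the deck $\{f_{H-v}\}$ is precisely the polynomial reconstruction problem, which is open in general, so it cannot be appealed to. Your alternative, the reduction formula $f_H=\lambda f_{H-v}-\sum_{u\sim v}f_{H-v-u}-2\sum_{C\ni v}f_{H-V(C)}$, does write $f_H$ as a combination of characteristic polynomials of proper subgraphs, but to know which terms appear in the sums you must already know the neighbours of $v$ and the cycles through $v$, i.e., structural information about $H$ beyond the spectra of its subgraphs; and if that kind of indexed information is granted, the corollary becomes trivially true (the spectra of the two-vertex induced subgraphs already determine $H$) and the argument proves nothing of interest. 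The honest resolution is to read the corollary as also granting the spectrum of $H$ itself, as Theorem \ref{BigThm} explicitly does; under that reading your first paragraph alone is a complete proof and coincides with the paper's intended one.
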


\section{Computing Coronals}\label{exs}
\noindent In this section, we will compute the coronals
$\chi_H(\lambda)$ for several families of graphs, and hence for such
$H$ obtain the full spectrum of the corona $G\circ H$ for any $G$.
The principal technique exploits the regularity or near-regularity of
a graph in order to greatly reduce the number of cofactor calculations
(relative to those required by Theorem \ref{Schwenk}) needed to
compute the coronal.  In particular, we use these ideas to compute the
coronals of regular graphs, complete bipartite graphs, and paths.

For graphs that are ``nearly regular'' in the sense that their degree
sequences are almost constant, we can take advantage of
linear-algebraic symmetries to compute the coronals.  We begin with
two concrete computations, those corresponding to regular and complete
bipartite graphs, before extracting the underlying heuristic and
applying it to the coronal of path graphs.  The case of regular
graphs, first addressed in \cite{BPS07}, is particularly
straight-forward from this viewpoint.

\begin{Prop}[Regular Graphs]\label{regular}
Let $H$ be $r$-regular on $n$ vertices.  Then
$$\chi_H(\lambda)=\frac{n}{\lambda-r}.$$ Thus for any graph $G$, the
spectrum of $G\circ H$ is precisely:
\begin{itemize}
\item Every non-maximal eigenvalue of $H$, each with multiplicity $|G|$.
\item The two eigenvalues
$$
\frac{\mu+r\pm\sqrt{(r-\mu)^2+4n}}{2}
$$
for each eigenvalue $\mu$ of $G$.
\end{itemize}
\end{Prop}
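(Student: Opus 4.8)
The plan is to obtain the coronal from a single eigenvector observation, and then read off the spectrum by feeding that formula into Theorem~\ref{BigThm} together with the old/new eigenvalue dichotomy established in the discussion after that theorem.

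First I would compute $\chi_H(\lambda)$. Because $H$ is $r$-regular, the all-ones vector ${\bf 1}_n$ is an eigenvector of $B$ with eigenvalue $r$, hence an eigenvector of $\lambda I_n - B$ with eigenvalue $\lambda - r$. Working over $\C(\lambda)$, where $\lambda - r \ne 0$, this gives $(\lambda I_n - B)^{-1}{\bf 1}_n = (\lambda-r)^{-1}{\bf 1}_n$, so that
\[
\chi_H(\lambda) = {\bf 1}_n^T(\lambda I_n - B)^{-1}{\bf 1}_n = \frac{1}{\lambda-r}\,{\bf 1}_n^T{\bf 1}_n = \frac{n}{\lambda-r}.
\]
Since $n \ne 0$, the fraction $n/(\lambda-r)$ is already in lowest terms, so in the notation of the discussion following Theorem~\ref{BigThm} we have $d = 1$ and $g(\lambda) = f_H(\lambda)/(\lambda-r)$.

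Next I would substitute into Theorem~\ref{BigThm} to get $f_{G\circ H}(\lambda) = f_H(\lambda)^m\, f_G\big(\lambda - \tfrac{n}{\lambda-r}\big)$ and apply the old/new description. The only pole of $\chi_H$ is $\lambda = r$, which is the largest eigenvalue of $H$ (it equals the common vertex degree, and the spectral radius of an adjacency matrix is at most the maximum degree). Hence the ``old'' eigenvalues are the roots of $g(\lambda) = f_H(\lambda)/(\lambda-r)$, i.e.\ the eigenvalues of $H$ with one copy of $r$ removed --- precisely the non-maximal eigenvalues when $H$ is connected, with $r$ itself surviving at reduced multiplicity in the disconnected case --- each occurring with multiplicity $|G| = m$. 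For the ``new'' eigenvalues, for each eigenvalue $\mu$ of $G$ I would solve $\lambda - \tfrac{n}{\lambda-r} = \mu$; since $\lambda = r$ is not a solution (it would force $-n = 0$), clearing denominators is legitimate and yields $\lambda^2 - (\mu+r)\lambda + (\mu r - n) = 0$, whose roots are $\tfrac{1}{2}\big(\mu + r \pm \sqrt{(\mu+r)^2 - 4(\mu r - n)}\,\big) = \tfrac{1}{2}\big(\mu + r \pm \sqrt{(r-\mu)^2 + 4n}\,\big)$, each carrying the multiplicity of $\mu$ in $G$. As a consistency check, the two bullets contribute $(n-1)m + 2m = m(n+1)$ eigenvalues in all.

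The only place a genuine idea enters is the identity $B{\bf 1}_n = r{\bf 1}_n$, which is immediate, so the main point requiring care is the bookkeeping in the middle step: confirming that exactly one factor of $\lambda - r$ cancels between $f_H(\lambda)^m$ and the poles of $f_G(\lambda - \chi_H(\lambda))$ (equivalently that $d = 1$ and $g(\lambda) = f_H(\lambda)/(\lambda-r)$), and handling the multiplicity of $r$ when $H$ is disconnected so that the phrase ``non-maximal eigenvalue of $H$'' is read with the correct multiplicity convention. Everything after that is the quadratic formula.
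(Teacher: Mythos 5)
Your proposal is correct and follows essentially the same route as the paper: both derive $\chi_H(\lambda)=n/(\lambda-r)$ from the eigenvector relation $B{\bf 1}_n=r{\bf 1}_n$ and then read off the spectrum via the old/new eigenvalue dichotomy, solving $\lambda-\tfrac{n}{\lambda-r}=\mu$ for the new eigenvalues. Your additional remarks on the degree-one denominator, the explicit quadratic, and the multiplicity convention for disconnected regular graphs are accurate refinements of the same argument.
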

\begin{proof}
Let $B$ be the adjacency matrix of $H$.  By regularity, we have $B{\bf
  1}_n=r{\bf 1}_n$, and hence $(\lambda I-B){\bf 1}_n=(\lambda-r){\bf 1}_n$.
Cross-dividing and multiplying by ${\bf 1}_n^T$,
$$\chi_H(\lambda)={\bf 1}_n^T(\lambda I-B)^{-1}{\bf 1}_n=\frac{{\bf
    1}_n^T{\bf 1}_n}{\lambda-r}=\frac{n}{\lambda-r}.$$ The only pole
of $\chi_H(\lambda)$ is the maximal eigenvalue $\lambda=r$ of $H$, and
the ``new'' eigenvalues are obtained by solving
$\lambda-\frac{n}{\lambda-r}=\mu$ for each eigenvalue $\mu$ of $G$.
\end{proof}
It is noteworthy that all $r$-regular graphs on $n$ vertices have the
same coronal, especially given that the cofactors of the corresponding
matrices $(B-\lambda I)^{-1}$ appear to be markedly dissimilar.  The
simplicity of this scenario, and the easily checked observation that
cospectral regular graphs must have the same regularity, lead to the
following corollary.  We will make use of this corollary in the final
section.

\begin{Cor}\label{reg}
Cospectral regular graphs have the same coronal.
\end{Cor}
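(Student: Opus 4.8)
The plan is to reduce the statement immediately to Proposition \ref{regular}, which says that the coronal of any $r$-regular graph on $n$ vertices is the single rational function $n/(\lambda-r)$, a quantity depending only on the pair of integers $(n,r)$. Thus it suffices to show that if $H$ and $H'$ are cospectral regular graphs, then they have the same number of vertices and the same common degree.

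First I would note that cospectral graphs have characteristic polynomials of the same degree, and that degree is the number of vertices; hence $|H|=|H'|=:n$. Next, for an $r$-regular graph $H$ on $n$ vertices with adjacency matrix $B$, counting edges gives $\tr(B^2)=\sum_i\lambda_i^2=nr$, where the $\lambda_i$ range over the eigenvalues of $H$. Therefore $r=\frac1n\sum_i\lambda_i^2$ is a function of the spectrum alone. (Alternatively, for a regular graph the all-ones vector is an eigenvector for $r$, and $r$ is the eigenvalue of largest absolute value, so $r=\max_i|\lambda_i|$ is again visibly spectrally determined.) Applying this to both $H$ and $H'$ and invoking cospectrality yields $r=r'$.

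Finally, with $n$ and $r$ now known to agree for $H$ and $H'$, Proposition \ref{regular} gives $\chi_H(\lambda)=\dfrac{n}{\lambda-r}=\chi_{H'}(\lambda)$, completing the argument. There is essentially no obstacle: all of the substance lives in Proposition \ref{regular}, and the only step requiring a (one-line) justification is the elementary fact that the common degree of a regular graph can be recovered from its spectrum.
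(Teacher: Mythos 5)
Your proof is correct and follows the paper's own route: the paper likewise deduces the corollary from Proposition \ref{regular} together with the ``easily checked observation that cospectral regular graphs must have the same regularity,'' which is exactly the step you spell out via $\tr(B^2)=nr$. No issues.
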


\noindent As a second class of examples, we compute the coronals of
complete bipartite and $n$-partite graphs.

\begin{Prop}[Complete Bipartite Graphs]\label{cbg}
Let $H=K_{p,q}$ be a complete bipartite graph on $p+q=n$ vertices.
Then
$$\chi_H(\lambda)=\frac{n\lambda+2pq}{\lambda^2-pq}.$$
For any graph $G$, the spectrum of $G\circ H$ is given by:
\begin{itemize}
\item The eigenvalue $0$ with multiplicity $m(n-2)$; and
\item For each eigenvalue $\mu$ of $G$, the roots of the polynomial
$$x^3-\mu x^2-(p+q+pq)x+pq(\mu-2).$$
\end{itemize}
\end{Prop}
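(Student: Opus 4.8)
The plan is to compute $\chi_H(\lambda)$ directly from the block structure of the adjacency matrix and then substitute into Theorem~\ref{BigThm}. Order the $p+q=n$ vertices so that
$$B=\begin{pmatrix}\mathbf{0}_{p\times p}&\mathbf{1}_{p\times q}\\\mathbf{1}_{q\times p}&\mathbf{0}_{q\times q}\end{pmatrix}.$$
Let $W\subset\C(\lambda)^n$ be the two-dimensional subspace of vectors that are constant on each part, with basis $e_1=(\mathbf{1}_p^T,\mathbf{0}_q^T)^T$ and $e_2=(\mathbf{0}_p^T,\mathbf{1}_q^T)^T$. Since $Be_1=pe_2$ and $Be_2=qe_1$, the operator $\lambda I_n-B$ preserves $W$ and acts there by the matrix $\bigl(\begin{smallmatrix}\lambda&-q\\-p&\lambda\end{smallmatrix}\bigr)$, of determinant $\lambda^2-pq\neq 0$. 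As $\mathbf{1}_n=e_1+e_2\in W$ and $\lambda I_n-B$ is invertible over $\C(\lambda)$, the vector $v:=(\lambda I_n-B)^{-1}\mathbf{1}_n$ must itself lie in $W$. Writing $v=ae_1+be_2$ and solving $\lambda a-qb=1$, $\lambda b-pa=1$ gives $a=(\lambda+q)/(\lambda^2-pq)$ and $b=(\lambda+p)/(\lambda^2-pq)$, whence $\chi_H(\lambda)=\mathbf{1}_n^Tv=pa+qb=\dfrac{p(\lambda+q)+q(\lambda+p)}{\lambda^2-pq}=\dfrac{n\lambda+2pq}{\lambda^2-pq}$, as claimed.

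For the spectral description I would recall that $K_{p,q}$ has eigenvalues $\pm\sqrt{pq}$, each simple, together with $0$ of multiplicity $n-2$, so $f_H(\lambda)=\lambda^{n-2}(\lambda^2-pq)$. Factoring $f_G(x)=\prod_\mu(x-\mu)$ over the eigenvalues $\mu$ of $G$ with multiplicity, Theorem~\ref{BigThm} yields
$$f_{G\circ H}(\lambda)=\lambda^{m(n-2)}(\lambda^2-pq)^m\prod_\mu\Bigl(\lambda-\mu-\tfrac{n\lambda+2pq}{\lambda^2-pq}\Bigr).$$
The crucial point is that $\lambda-\mu-\chi_H(\lambda)=\dfrac{(\lambda-\mu)(\lambda^2-pq)-n\lambda-2pq}{\lambda^2-pq}$, whose numerator expands, using $n=p+q$, to $\lambda^3-\mu\lambda^2-(p+q+pq)\lambda+pq(\mu-2)$. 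The $m$ denominators appearing in the product cancel against the factor $(\lambda^2-pq)^m$ out front, leaving $f_{G\circ H}(\lambda)=\lambda^{m(n-2)}\prod_\mu\bigl(\lambda^3-\mu\lambda^2-(p+q+pq)\lambda+pq(\mu-2)\bigr)$, from which the stated eigenvalue list is immediate; the count $m(n-2)+3m=m(n+1)$ confirms that all eigenvalues are accounted for.

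No step here is a genuine obstacle; the only point deserving comment is the reduction to the subspace $W$, which is valid precisely because $\lambda I_n-B$ preserves $W$ and is invertible over $\C(\lambda)$, forcing the unique preimage of $\mathbf{1}_n$ to lie in $W$. It is also worth observing that the computation goes through verbatim whether or not $\lambda^2-pq$ is already the reduced denominator of $\chi_H(\lambda)$: it is reduced exactly when $p\neq q$, while for $p=q$ the graph $K_{p,p}$ is $p$-regular and one checks directly that $\lambda=-p$ is then a root of the cubic for every $\mu$, so the cubic formulation uniformly recovers the ``old'' eigenvalue $-p$ of multiplicity $m$ predicted by Proposition~\ref{regular}.
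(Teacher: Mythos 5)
Your proof is correct and follows essentially the same route as the paper's: the paper's auxiliary matrix $X=\mathrm{diag}((q+\lambda)I_p,(p+\lambda)I_q)$ satisfying $(\lambda I-B)X{\bf 1}_n=(\lambda^2-pq){\bf 1}_n$ encodes exactly the vector $v=(\lambda I-B)^{-1}{\bf 1}_n$ that you compute by restricting to the part-constant subspace $W$, and the spectral description is likewise obtained by substituting into Theorem~\ref{BigThm} and clearing denominators. Your added check of the $p=q$ case is a nice touch but not a departure in method.
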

\begin{proof}
Let $B=\begin{bmatrix}{\bf 0}_{p,p}&{\bf 1}_{p,q}\\{\bf 1}_{q,p}&{\bf
  0}_{q,q}\end{bmatrix}$ be the adjacency matrix of $K_{p,q}$ and let
$X=\text{diag}((q+\lambda)I_p,(p+\lambda)I_q)$ be the diagonal matrix
with the first $p$ diagonal entries being $(q+\lambda)$ and the last
$q$ entries being $(p+\lambda)$.  Then $(\lambda I-B)X{\bf
  1}_n=(\lambda^2-pq){\bf 1}_n$, and so
$$
\chi_H(\lambda)={\bf 1}_n^T(\lambda I-B)^{-1}{\bf 1}_n=\frac{{\bf 1}_n^TX{\bf 1}_n}{\lambda^2-pq}=\frac{(p+q)\lambda+2pq}{\lambda^2-pq}.
$$
\noindent
Thus the coronal has poles at both of the non-zero eigenvalues
$\pm\sqrt{pq}$ of $K_{p,q}$, leaving only the eigenvalue 0 with
multiplicity $p+q-2$.  Finally, solving $\lambda-\chi_H(\lambda)=\mu$
gives the new eigenvalues in the spectrum as stated in the
proposition.
\end{proof}
\begin{Remark}
It might be tempting in light of Propositions \ref{regular} and
\ref{cbg} to hope that the degree sequence of a graph determines its
coronal.  This too, like the analogous conjecture stemming from
cospectrality (Remark \ref{cospec}), turns out to be false: The graphs
$P_5$ and $K_2\cup K_3$ have the same degree sequence, but we find by
direct computation that
$$
\chi_{P_5}(\lambda)=\frac{5\lambda^2+8\lambda-1}{\lambda^3-3\lambda}\quad\quad\quad\quad\chi_{K_2\cup K_3}(\lambda)=\frac{5\lambda-7}{\lambda^2-3\lambda+2}.
$$
\end{Remark}

\noindent Similar to the complete bipartite computation, we have the
following somewhat technical generalization to complete $k$-bipartite
graphs.
\begin{Prop}
Let $H$ be the complete $k$-partite graph $K_{n_1,n_2,\ldots,n_k}$.
Then
$$\chi_H(\lambda)=\left(\frac{\prod_{j=1}^k(n_j+\lambda)}{\sum_{j=1}^kn_j\prod\limits^k_{\substack{{i=1}\\i\neq
j}}(n_i+\lambda)}-1\right)^{-1}=\frac{\sum_{j=1}^kjC_j\lambda^{k-j}}{\lambda^k-\sum_{j=2}^k(j-1)C_j\lambda^{k-j}}$$
where $C_j$ is the sum of the ${k \choose j}$ products of the form
$n_{i_1}n_{i_2}\cdots n_{i_j}$ with distinct indices.
\end{Prop}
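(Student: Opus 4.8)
The plan is to imitate the proof of Proposition~\ref{cbg}, exploiting the block structure of the adjacency matrix $B$ of $H=K_{n_1,\dots,n_k}$. Order the $n=\sum_{j=1}^k n_j$ vertices so that the first $n_1$ lie in part $1$, the next $n_2$ in part $2$, and so on, and for $1\le j\le k$ let $e_j\in\C^n$ be the $0/1$ indicator vector of part $j$, so that ${\bf 1}_n=\sum_{j=1}^k e_j$. The key structural observation is that the $k$-dimensional subspace $V=\operatorname{span}(e_1,\dots,e_k)$ is invariant under $B$: since there are no edges inside a part and all edges between distinct parts, one checks directly that $Be_j=n_j({\bf 1}_n-e_j)$. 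Because ${\bf 1}_n\in V$ and $\lambda I-B$ preserves $V$, computing $\chi_H(\lambda)={\bf 1}_n^T(\lambda I-B)^{-1}{\bf 1}_n$ reduces to solving $(\lambda I-B)v={\bf 1}_n$ for a vector $v\in V$ and then evaluating ${\bf 1}_n^Tv$.

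Next I would carry out this small linear-algebra computation. Writing $v=\sum_j c_j e_j$ and using $Be_j=n_j({\bf 1}_n-e_j)$, the equation $(\lambda I-B)v={\bf 1}_n$ becomes, upon comparing coefficients of each $e_i$, the system $(\lambda+n_i)c_i-\sum_{j}n_jc_j=1$ for $i=1,\dots,k$. Setting $S:=\sum_j n_j c_j$ gives $c_i=(1+S)/(\lambda+n_i)$, and substituting back yields the self-consistency relation $S=(1+S)\,T$ with $T:=\sum_{j=1}^k n_j/(\lambda+n_j)$; hence $S=T/(1-T)$. Since $\chi_H(\lambda)={\bf 1}_n^Tv=\sum_j n_j c_j=S$, we get $\chi_H(\lambda)=T/(1-T)=(1/T-1)^{-1}$, and clearing denominators in $T$ by multiplying numerator and denominator by $\prod_{i=1}^k(\lambda+n_i)$ produces exactly the first displayed expression in the proposition.

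It then remains to convert $T/(1-T)$ into the stated rational function of the $C_j$. Put $p(\lambda):=\prod_{i=1}^k(\lambda+n_i)=\sum_{j=0}^k C_j\lambda^{k-j}$, so that $C_0=1$ and $C_j$ is the $j$-th elementary symmetric function of $n_1,\dots,n_k$, matching the statement. Multiplying top and bottom of $T/(1-T)$ by $p(\lambda)$, the numerator is $\sum_j n_j\prod_{i\ne j}(\lambda+n_i)$ and the denominator is $p(\lambda)-\sum_j n_j\prod_{i\ne j}(\lambda+n_i)$. Writing $n_j=(\lambda+n_j)-\lambda$ shows $\sum_j n_j\prod_{i\ne j}(\lambda+n_i)=k\,p(\lambda)-\lambda p'(\lambda)$ (using $p'(\lambda)=\sum_j\prod_{i\ne j}(\lambda+n_i)$). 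A direct term-by-term comparison of coefficients then gives $k\,p(\lambda)-\lambda p'(\lambda)=\sum_{j=1}^k jC_j\lambda^{k-j}$ and $\lambda p'(\lambda)-(k-1)p(\lambda)=\lambda^k-\sum_{j=2}^k(j-1)C_j\lambda^{k-j}$, which is the claimed closed form. (Taking $k=2$ recovers Proposition~\ref{cbg} as a check.)

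I do not expect a genuine obstacle: the only real idea is recognizing the $B$-invariant subspace spanned by the part-indicators, after which everything is a $k\times k$ linear system with a one-parameter consistency condition. The mildly delicate point is the final bookkeeping that rewrites $k\,p(\lambda)-\lambda p'(\lambda)$ and $\lambda p'(\lambda)-(k-1)p(\lambda)$ in terms of the $C_j$ — it is just extracting coefficients of $p$ and of $\lambda p'$, but one has to track the $j=0$ and $j=1$ terms carefully, since those are precisely what make the numerator begin at $\lambda^{k-1}$ and kill the $\lambda^{k-1}$ term of the denominator.
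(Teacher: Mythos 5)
Your proof is correct and is essentially the paper's own argument: the vector $v=\sum_j c_je_j$ you solve for is, up to the scalar $(1+S)/p(\lambda)$, exactly the vector $X{\bf 1}_n$ that the paper writes down directly via the block-diagonal matrix $X$ with blocks $g_j I_{n_j}$, and both routes land on $\chi_H=\bigl(\sum_j n_jg_j\bigr)/\bigl(p-\sum_j n_jg_j\bigr)$. The only difference is presentational --- you derive the ansatz from the invariant subspace and a $k\times k$ system rather than positing it, and you spell out the $kp-\lambda p'$ bookkeeping that the paper leaves as ``which gives the result.''
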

\begin{proof}
Let $B$ be the adjacency matrix of $H$, let
$g_j(\lambda)=\prod\limits^k_{\substack{{i=1}\\i\neq
    j}}(n_i+\lambda)$, and let $X$ be the block diagonal matrix
whose $j$-th block is $g_jI_{n_j}$.  Then
$$(\lambda I_n-B)X{\bf 1}_n=\left[\prod_{i=1}^k(n_i+\lambda)-\sum_{j=1}^kn_jg_j\right]{\bf 1}_n.$$
Solving as in the bipartite case, we find
\begin{align*}
\chi_H(\lambda)={\bf 1}_n^T(\lambda I-B)^{-1}{\bf 1}_n&=\frac{{\bf
1}_n^TX{\bf 1}_n}{\prod_{i=1}^k(n_i+\lambda)-\sum_{j=1}^kn_jg_j}\\
&=\frac{\sum_{j=1}^kn_jg_j}{\prod_{i=1}^k(n_i+\lambda)-\sum_{j=1}^kn_jg_j},
\end{align*}
which gives the result.
\end{proof}

The proof technique for the last two propositions generalizes to
``nearly regular'' graphs, by which we mean graphs $H$ for which all but a small
number of vertices have the same degree $r$.  In this case, we can write
$$(\lambda I-B){\bf 1}_n=(\lambda-r){\bf 1}_n+{\bf v},$$ where ${\bf
  v}=(v_i)$ is a vector consisting mostly of 0's.  This gives
$$(\lambda I-B)^{-1}{\bf 1}_n=\frac{1}{\lambda-r}\left[{\bf 1}_n-(\lambda I-B)^{-1}{\bf v}\right],$$
and thus, using the adjugate formula for the determinant,
$$\chi_H(\lambda)={\bf 1}_n^T(\lambda I-B)^{-1}{\bf
  1}_n=\frac{1}{\lambda-r}\left[n-\frac{1}{f_H(\lambda)}\sum_{1\leq
    i,j\leq n}v_iC_{i,j}\right],$$ where $C_{i,j}$ denotes the
$(i,j)$-cofactor of $\lambda I-B$.  Since $v_i$ is zero for most
values of $i$, we have an effective technique for computing coronals
if we can compute a small number of cofactors (as opposed to, in
particular, computing \emph{all} of the cofactors and using Theorem
\ref{Schwenk}). For example, if we let $f_n=f_{P_n}(\lambda)$ be the
characteristic polynomial of the path graph $P_n$ on $n$ vertices (by
convention, set $f_0=1$), we can compute coronals of paths as
follows:

\begin{Prop}[Path Graphs]\label{path}
Let $H=P_n$.  Then
$$\chi_H=\frac{nf_n-2\sum_{j=0}^{n-1}f_j}{(\lambda-2)f_n}.$$
\end{Prop}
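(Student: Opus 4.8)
The plan is to apply the ``nearly regular'' machinery developed in the paragraph immediately preceding the proposition. Label the vertices of $P_n$ as $1,2,\ldots,n$ along the path, so that the two endpoints $1$ and $n$ have degree $1$ and the $n-2$ interior vertices have degree $2$. Then the $i$-th entry of $(\lambda I-B)\mathbf 1_n$ is $\lambda-\deg(i)$, which equals $\lambda-2$ for interior vertices and $(\lambda-2)+1$ for the endpoints; hence $(\lambda I-B)\mathbf 1_n=(\lambda-2)\mathbf 1_n+\mathbf v$ with $r=2$ and $\mathbf v$ the vector having $v_1=v_n=1$ and all other entries $0$. Substituting this $\mathbf v$ into the formula
$$\chi_H(\lambda)=\frac{1}{\lambda-r}\left[n-\frac{1}{f_H(\lambda)}\sum_{1\le i,j\le n}v_iC_{i,j}\right]$$
collapses the double sum down to the first and last rows of the cofactor matrix of $\lambda I-B$:
$$\chi_{P_n}(\lambda)=\frac{1}{\lambda-2}\left[n-\frac{1}{f_n}\left(\sum_{j=1}^nC_{1,j}+\sum_{j=1}^nC_{n,j}\right)\right].$$
So the whole task reduces to evaluating these two row-sums of cofactors.

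For the row-sums I would invoke Schwenk's theorem (Theorem~\ref{Schwenk}), exploiting the fact that a path graph has a \emph{unique} path between any two of its vertices. In $P_n$ the only path from vertex $1$ to vertex $j$ is the initial segment on the vertex set $\{1,2,\ldots,j\}$, and deleting it leaves precisely the subpath on $\{j+1,\ldots,n\}$, i.e.\ a copy of $P_{n-j}$ (the convention $f_0=1$ handles $j=n$, and $j=1$ corresponds to deleting the single vertex $1$). Since $B$ is symmetric, $C_{1,j}=\operatorname{adj}(\lambda I-B)_{1,j}$, so Schwenk's theorem gives $C_{1,j}=f_{n-j}$, and after reindexing
$$\sum_{j=1}^nC_{1,j}=\sum_{j=1}^nf_{n-j}=\sum_{j=0}^{n-1}f_j.$$
Running the same argument from the other endpoint (the path from $n$ to $j$ deletes the vertices $\{j,\ldots,n\}$, leaving the copy of $P_{j-1}$ on $\{1,\ldots,j-1\}$), or simply invoking the left–right symmetry of $P_n$, gives $\sum_{j=1}^nC_{n,j}=\sum_{j=1}^nf_{j-1}=\sum_{j=0}^{n-1}f_j$ as well. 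Feeding both sums into the displayed expression for $\chi_{P_n}$ and clearing the factor $\lambda-2$ yields
$$\chi_{P_n}(\lambda)=\frac{1}{\lambda-2}\left[n-\frac{2\sum_{j=0}^{n-1}f_j}{f_n}\right]=\frac{nf_n-2\sum_{j=0}^{n-1}f_j}{(\lambda-2)f_n},$$
which is the claimed formula.

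The argument is essentially routine once this reduction is set up; the only genuinely delicate point is the bookkeeping in Schwenk's theorem — correctly matching the $(i,j)$-entry of the adjugate with the intended cofactor, and correctly identifying the deleted-path subgraph $H-P$ as a smaller path in all cases, including the two boundary situations where $P$ is a single vertex or all of $P_n$. An alternative would be to try to extract a direct three-term recursion for $\chi_{P_n}(\lambda)$ from the standard recursion $f_n=\lambda f_{n-1}-f_{n-2}$, but the cofactor route above is cleaner given that the near-regularity framework is already in hand, and it has the pleasant feature of displaying the answer in terms of the characteristic polynomials of all the smaller paths simultaneously.
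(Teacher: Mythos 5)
Your proof is correct and follows essentially the same route as the paper: both apply the ``nearly regular'' formula with $r=2$ and ${\bf v}=e_1+e_n$, reducing the computation to the two endpoint row-sums of cofactors, each of which equals $\sum_{j=0}^{n-1}f_j$. The only (cosmetic) difference is that you justify the individual cofactor values $C_{1,j}=f_{n-j}$ via Schwenk's theorem and the uniqueness of paths in $P_n$, whereas the paper obtains them by an induction using cofactor expansion --- a substitution the paper itself endorses in the remark immediately following the proposition.
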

\begin{proof}
In the notation of the discussion preceding the proposition, we take
$r=2$ and ${\bf v}=[1\, 0\, 0\, \cdots\, 0\, 0\, 1]^T$.  Further, we
note that an easy induction argument using cofactor expansion gives
$C_{j,1}=C_{j,n}=f_{j-1}$.  Thus we obtain
$$\chi_{P_n}(\lambda)=\frac{1}{\lambda-2}\left[n-\frac{1}{f_n}\sum_{j=1}^n
  (C_{j,1}+C_{j,n})\right]=\frac{1}{\lambda-2}\left[n-\frac{2}{f_n}\sum_{j=0}^{n-1}
  f_j\right],$$
from which the result follows.
\end{proof}
\noindent From this, we easily calculate the coronals for the first few path graphs:

\begin{table}[!ht]
\centering
\parbox{195pt}{Table 2: The coronals $\chi_{P_n}(\lambda)$ for $1\leq n\leq 7$}

\vspace*{.1in}

\begin{tabular}{c|ccccccc}
$n$&1&2&3&4&5&6&7\\\hline
$\chi_{P_n}(\lambda)$&$\frac{1}{\lambda}$&$\frac{2}{\lambda-1}$&$\frac{3\lambda+4}{\lambda^2-2}$&$\frac{4\lambda+2}{\lambda^2-\lambda-1}$&$\frac{5\lambda^2+8\lambda-1}{\lambda^3-3\lambda}$&$\frac{6\lambda^2
+ 4\lambda - 4}{\lambda^3 - \lambda^2 - 2\lambda +
1}$&$\frac{7\lambda^3 + 12\lambda^2 - 6\lambda - 8}{\lambda^4 -
4\lambda^2 + 2}$
\end{tabular}
\end{table}

\begin{Remark}
This particular example can also be computed using Theorem \ref{Schwenk}: For $i$ and $j$
distinct, there is a unique path $[i,j]$ from vertex $i$ to vertex $j$, so the
sum in the theorem reduces to a single term:
$$
\operatorname{adj}(\lambda I-B)_{ij}=f_{P_n-[i,j]}=f_{i-1}f_{n-j}.
$$
Similarly, we find $\operatorname{adj}(\lambda I-B)_{ii}=f_{i-1}f_{n-i}$.
Summing over all the cofactors gives
$$
\chi_{P_n}(\lambda)=\frac{1}{f_n}\left(\sum_{i=1}^nf_{i-1}f_{n-i}+2\sum_{i,j=1}^nf_{i-1}f_{n-j}\right),
$$ which reduces to the result computed in Proposition \ref{path}
after some arithmetic.
\end{Remark}

\section{Cospectrality}

At the end of \cite{BPS07}, the authors note that if $G_1$ and $G_2$
are cospectral graphs, then $G_1\circ K_1$ and $G_2\circ K_1$ are also
cospectral, and that (by repeated coronation with $K_1$) this leads to
an infinite collection of cospectral pairs.  Armed with the
characteristic polynomial
$$f_{G\circ H}(\lambda)=f_H(\lambda)^mf_G(\lambda-\chi_H(\lambda))$$
of the corona (Theorem \ref{BigThm}), we can greatly generalize this
observation on two fronts.

\begin{Cor}\label{cospec2}
If $G_1$ and $G_2$ are cospectral, and $H$ is any graph, then
$G_1\circ H$ and $G_2\circ H$ are cospectral.  Further, if $H_1$ and
$H_2$ are cospectral and $\chi_{H_1}=\chi_{H_2}$, and $G$ is any
graph, then $G\circ H_1$ and $G\circ H_2$ are cospectral.
\end{Cor}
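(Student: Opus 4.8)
The plan is to derive both assertions directly from the characteristic-polynomial identity of Theorem \ref{BigThm}, since ``cospectral'' means precisely ``having equal characteristic polynomial.'' No new computation is needed; the only care required is a bookkeeping check that the formula applies with matching parameters.

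First I would record the small observation that makes the formula usable: if $G_1$ and $G_2$ are cospectral then $f_{G_1}=f_{G_2}$, and in particular $\deg f_{G_1}=\deg f_{G_2}$, so $G_1$ and $G_2$ have the same number of vertices $m$. Hence $G_1\circ H$ and $G_2\circ H$ both have $m(|H|+1)$ vertices, and Theorem \ref{BigThm} gives $f_{G_i\circ H}(\lambda)=f_H(\lambda)^m f_{G_i}(\lambda-\chi_H(\lambda))$ with the \emph{same} exponent $m$ and the \emph{same} rational function $\chi_H(\lambda)$ for $i=1,2$. Substituting $f_{G_1}=f_{G_2}$ then yields $f_{G_1\circ H}=f_{G_2\circ H}$, proving the first claim.

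For the second claim the same reasoning applies to the right-hand factor. Cospectrality of $H_1$ and $H_2$ forces $f_{H_1}=f_{H_2}$ and hence $|H_1|=|H_2|=:n$, so $G\circ H_1$ and $G\circ H_2$ each have $m(n+1)$ vertices; the added hypothesis supplies $\chi_{H_1}=\chi_{H_2}$ as elements of $\C(\lambda)$. Thus both inputs that Theorem \ref{BigThm} feeds into the formula agree, and $f_{G\circ H_1}(\lambda)=f_{H_1}(\lambda)^m f_G(\lambda-\chi_{H_1}(\lambda))=f_{H_2}(\lambda)^m f_G(\lambda-\chi_{H_2}(\lambda))=f_{G\circ H_2}(\lambda)$.

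I do not expect a genuine obstacle: the mathematical content lives entirely in Theorem \ref{BigThm}, and the corollary is a one-line substitution in each case. The single point worth flagging in the write-up is \emph{why} the extra hypothesis $\chi_{H_1}=\chi_{H_2}$ is imposed in the second statement rather than being automatic: Remark \ref{cospec} exhibits cospectral graphs with unequal coronals, so without that hypothesis the conclusion fails, and this is precisely the phenomenon that motivated introducing the coronal as a new invariant in the first place.
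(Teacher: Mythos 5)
Your proof is correct and matches the paper's approach: the paper treats this corollary as an immediate consequence of the formula $f_{G\circ H}(\lambda)=f_H(\lambda)^m f_G(\lambda-\chi_H(\lambda))$ from Theorem \ref{BigThm}, which is exactly the substitution argument you carry out. Your added bookkeeping (that cospectral graphs have the same number of vertices, so the exponent $m$ matches) and the remark on why $\chi_{H_1}=\chi_{H_2}$ must be hypothesized rather than deduced are both accurate and consistent with Remark \ref{cospec}.
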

We remark that examples of this second type do indeed exist.  
Define the \emph{switching graph} $\text{Sw}(T)$ of a tree $T$ with adjacency matrix
$A_T$ to be the graph with adjacency matrix
\[
A_{\text{Sw}(T)}:=\begin{bmatrix}1&0\\0&1\end{bmatrix}\otimes A_T+\begin{bmatrix}0&1\\1&0\end{bmatrix}\otimes A_{\ol{T}},
\]
and let $T_1$ and $T_2$ be non-isomorphic cospectral trees with
cospectral complements (note that by \cite{GM76}, generalizing
\cite{Sc73}, ``almost all'' trees admit a cospectral pair with
cospectral complement).  Then the switching graphs $\text{Sw}(T_1)$
and $\text{Sw}(T_2)$ are non-isomorphic cospectral regular graphs (see
\cite{GM82}, Construction 3.7), and also have the same coronal by
Corollary \ref{reg}.  Corollary \ref{cospec2} now implies that for
\emph{any} graphs $G$ and $H$, we have the cospectral pair $G\,\circ
\,\text{Sw}(T_1)$ and $G\,\circ \,\text{Sw}(T_2)$ and the cospectral
pair $\text{Sw}(T_1)\circ H$ and $\text{Sw}(T_2)\circ H$.  This gives,
for example, infinitely many cospectral pairs of graphs with any given
graph $G$ as an induced subgraph.

\end{document}